\newtheorem{theorem}{Theorem}[section]
\newtheorem{proposition}[theorem]{Proposition}
\newtheorem{claim}{Claim}[section]
\newtheorem{remark}{Remark}[section]
\newtheorem{condition}{Condition}[section]
\begin{document}
\title{Non-central moderate deviations for compound fractional Poisson processes\thanks{We
acknowledge the support of Indam-GNAMPA (for the Research Project \lq\lq Stime asintotiche:
principi di invarianza e grandi deviazioni\rq\rq), MIUR (for the Excellence Department 
Project awarded to the Department of Mathematics, University of Rome Tor Vergata (CUP 
E83C18000100006)) and University of Rome Tor Vergata (for the Research Program \lq\lq Beyond
Borders\rq\rq, Project \lq\lq Asymptotic Methods in Probability\rq\rq (CUP E89C20000680005)).}}
\author{Luisa Beghin\thanks{Address: Dipartimento di Scienze Statistiche, Sapienza 
Universit\`a di Roma, Piazzale Aldo Moro 5, I-00185 Roma, Italy. e-mail:
\texttt{luisa.beghin@uniroma1.it}}
\and Claudio Macci\thanks{Address: Dipartimento di Matematica, Universit\`a di Roma Tor 
Vergata, Via della Ricerca Scientifica, I-00133 Rome, Italy. e-mail:
\texttt{macci@mat.uniroma2.it}}}
\date{}
\maketitle
\begin{abstract}
The term \emph{moderate deviations} is often used in the literature to mean a class of large 
deviation principles that, in some sense, fill the gap between a convergence in 
probability to zero (governed by a large deviation principle) and a weak convergence to a
centered Normal distribution. We talk about \emph{non-central moderate deviations} when 
the weak convergence is towards a non-Gaussian distribution. In this paper we study non-central
moderate deviations for compound fractional Poisson processes with light-tailed jumps.\\ 
\ \\
\noindent\emph{Keywords}: Mittag-Leffler function, inverse of stable subordinator, weak convergence.\\
\noindent\emph{2000 Mathematical Subject Classification}: 60F10, 60F05, 60G22, 33E12.
\end{abstract}

\section{Introduction}
The theory of large deviations gives an asymptotic computation of small probabilities on 
exponential scale (see \cite{DemboZeitouni} as a reference of this topic) and the basic definition
of this theory is the \emph{large deviation principle}. A large deviation principle provides some 
asymptotic bounds for a family of probability measures on the same topological space; these bounds 
are expressed in terms of a \emph{speed function} (that tends to infinity) and a nonnegative
lower semicontinuous \emph{rate function} defined on the topological space.

The term \emph{moderate deviations} is used for a class of large deviation principles which fill the
gap between a convergence to a constant (at least in probability) and governed by a large deviation 
principle, and an asymptotic normality result. A more precise description is given in the following 
claim, where $t\to\infty$.

\begin{claim}\label{claim:standardMD}
We have a family of $\mathbb{R}^h$-valued random variables $\{C_t:t>0\}$ that converges (at least in 
probability) to the origin $\mathbf{0}\in\mathbb{R}^h$, and satisfies the large 
deviation principle with speed $v_t\to\infty$ and rate function $I_{\mathrm{LD}}$; moreover 
$\{\sqrt{v_t}C_t:t>0\}$ converges weakly to the centered Normal distribution with covariance matrix 
$\Sigma$. Then, for every family of positive numbers $\{a_t:t>0\}$ such that
\begin{equation}\label{eq:MDconditions}
a_t\to 0\ \mbox{and}\ v_ta_t\to\infty,
\end{equation}
the family of random variables $\{\sqrt{a_tv_t}C_t:t>0\}$ satisfies the large deviation principle with 
speed $1/a_t$ and a rate function $I_{\mathrm{MD}}$ defined by
$$I_{\mathrm{MD}}(x):=\sup_{\theta\in\mathbb{R}^h}\left\{\langle\theta,x\rangle
-\frac{1}{2}\langle\theta,\Sigma\theta\rangle\right\}
\quad\mbox{for all}\ x\in\mathbb{R}^h;$$
moreover we typically have $I_{\mathrm{LD}}(x)=I_{\mathrm{MD}}(x)=0$ if and only if $x=\mathbf{0}$, and 
$I_{\mathrm{MD}}$ behaves as $I_{\mathrm{LD}}$ locally around $\mathbf{0}$.
\end{claim}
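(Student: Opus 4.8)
The plan is to obtain the moderate deviation principle as an application of the G\"artner--Ellis theorem to the rescaled family $\{\sqrt{a_tv_t}\,C_t:t>0\}$ with speed $1/a_t$. The first step is to show that the limit
$$\Lambda(\theta):=\lim_{t\to\infty}a_t\log\mathbb{E}\left[e^{\frac{1}{a_t}\langle\theta,\sqrt{a_tv_t}C_t\rangle}\right]
=\lim_{t\to\infty}a_t\log\mathbb{E}\left[e^{\sqrt{v_t/a_t}\,\langle\theta,C_t\rangle}\right]$$
exists for every $\theta\in\mathbb{R}^h$ and equals $\frac{1}{2}\langle\theta,\Sigma\theta\rangle$. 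Since the function $\theta\mapsto\frac{1}{2}\langle\theta,\Sigma\theta\rangle$ is finite and smooth on the whole of $\mathbb{R}^h$, it is essentially smooth, so the G\"artner--Ellis theorem then provides the large deviation principle with speed $1/a_t$ and good rate function given by its Legendre--Fenchel transform, which by definition is $I_{\mathrm{MD}}$ (and equals $\frac{1}{2}\langle x,\Sigma^{-1}x\rangle$ when $\Sigma$ is invertible).

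To compute $\Lambda$, I would set $\lambda_t:=\theta/\sqrt{v_ta_t}$, so that $\sqrt{v_t/a_t}\,\langle\theta,C_t\rangle=v_t\langle\lambda_t,C_t\rangle$ and, by the second condition in \eqref{eq:MDconditions}, $\lambda_t\to\mathbf{0}$. From the large deviation principle for $\{C_t\}$ with speed $v_t$ (together with a suitable exponential moment condition, so that Varadhan's lemma applies), the function $\lambda\mapsto\frac{1}{v_t}\log\mathbb{E}[e^{v_t\langle\lambda,C_t\rangle}]$ converges, for $\lambda$ in a neighbourhood of the origin, to the Legendre--Fenchel transform $\Lambda_{\mathrm{LD}}$ of $I_{\mathrm{LD}}$. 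The weak convergence of $\{\sqrt{v_t}C_t\}$ to the centered Normal law forces $\Lambda_{\mathrm{LD}}$ to be twice differentiable at $\mathbf{0}$ with $\Lambda_{\mathrm{LD}}(\mathbf{0})=0$, $\nabla\Lambda_{\mathrm{LD}}(\mathbf{0})=\mathbf{0}$ and Hessian $\Sigma$, so that $\Lambda_{\mathrm{LD}}(\lambda)=\frac{1}{2}\langle\lambda,\Sigma\lambda\rangle+o(|\lambda|^2)$ as $\lambda\to\mathbf{0}$. Evaluating at $\lambda=\lambda_t$ and multiplying by $a_tv_t$ then gives
$$a_t\log\mathbb{E}\left[e^{v_t\langle\lambda_t,C_t\rangle}\right]=a_tv_t\cdot\frac{1}{v_t}\log\mathbb{E}\left[e^{v_t\langle\lambda_t,C_t\rangle}\right]
\sim a_tv_t\,\Lambda_{\mathrm{LD}}(\lambda_t)\sim a_tv_t\cdot\frac{\langle\theta,\Sigma\theta\rangle}{2v_ta_t}=\frac{1}{2}\langle\theta,\Sigma\theta\rangle,$$
the desired value of $\Lambda(\theta)$.

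The delicate point, and the one I expect to be the main obstacle, is the uniformity underlying the two asymptotic equivalences in the last display: Varadhan's lemma is a pointwise statement, whereas here it is applied along the moving sequence $\lambda_t\to\mathbf{0}$, so one genuinely needs the convergence of $\frac{1}{v_t}\log\mathbb{E}[e^{v_t\langle\lambda,C_t\rangle}]$ to $\Lambda_{\mathrm{LD}}(\lambda)$ to be locally uniform around the origin, and one needs enough regularity to identify the Hessian of $\Lambda_{\mathrm{LD}}$ at $\mathbf{0}$ with the covariance matrix $\Sigma$ of the limiting Normal law. These ingredients are not literally implied by the stated hypotheses, which is the reason for the informal ``typical'' formulation; in the concrete processes considered in the paper they will instead be verified directly. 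Once $\Lambda$ is identified, the remaining assertions are easy: $\theta\mapsto\frac{1}{2}\langle\theta,\Sigma\theta\rangle$ vanishes with zero gradient at $\theta=\mathbf{0}$, hence its Legendre transform $I_{\mathrm{MD}}$ is nonnegative and vanishes only at $x=\mathbf{0}$; $I_{\mathrm{LD}}$ vanishes only at $x=\mathbf{0}$ under the usual assumption that the limit $\mathbf{0}$ of $\{C_t\}$ is the unique zero of $I_{\mathrm{LD}}$; and, by Legendre duality, $I_{\mathrm{LD}}(x)=\frac{1}{2}\langle x,\Sigma^{-1}x\rangle+o(|x|^2)$ near $\mathbf{0}$ whereas $I_{\mathrm{MD}}(x)=\frac{1}{2}\langle x,\Sigma^{-1}x\rangle$ exactly (for $\Sigma$ invertible), so the two rate functions agree to leading order about the origin.
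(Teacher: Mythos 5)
There is nothing in the paper to compare your argument against: Claim 1.1 is not proved there. It is offered as an informal template (note the hedging word ``typically'' in its own statement), and the only justification the paper supplies is the i.i.d.\ prototype, where the moderate deviation principle is simply quoted from Theorem 3.7.1 of Dembo--Zeitouni. Your sketch is nonetheless the right one to have in mind, and it is in fact the strategy the paper actually executes in its genuine results (Propositions \ref{prop:LD} and \ref{prop:ncMD}): apply the G\"artner--Ellis theorem to the rescaled family at speed $1/a_t$, compute the limit $\Lambda(\theta)=\lim_t a_t\log\mathbb{E}[e^{\sqrt{v_t/a_t}\langle\theta,C_t\rangle}]$ explicitly from a closed-form expression for the moment generating function, check essential smoothness, and take the Legendre transform. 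You are also right to single out the delicate point: the stated hypotheses (an LDP for $\{C_t\}$ plus weak convergence of $\{\sqrt{v_t}C_t\}$) do \emph{not} imply convergence of the scaled cumulant generating functions along the moving arguments $\lambda_t\to\mathbf{0}$, nor do they identify the Hessian of $\Lambda_{\mathrm{LD}}$ at the origin with $\Sigma$; Varadhan's lemma gives pointwise, not locally uniform, information, and an LDP together with a CLT genuinely do not entail an MDP in general. So the Claim is not a theorem and cannot be proved from its hypotheses --- which is exactly why the paper states it as a ``claim'' with ``typically'' and then verifies the three constituent statements directly, by explicit computation with the Mittag-Leffler moment generating function \eqref{eq:MGF-S}, for the specific process it studies. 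Your proposal is best read not as a proof of Claim \ref{claim:standardMD} but as an accurate account of what must be verified case by case, and of why the verification cannot be done abstractly.
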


Here we recall a well-known prototype example with a discrete parameter $n$ in place of $t$. We set
$$C_n:=\frac{X_1+\cdots X_n}{n}\quad\mbox{for all}\ n\geq 1,$$
where $\{X_n:n\geq 1\}$ is a sequence of i.i.d. $\mathbb{R}^h$-valued centered random variables 
with finite covariance matrix $\Sigma$. For simplicity we also assume that 
$\mathbb{E}\left[e^{\langle\theta,X_1\rangle}\right]$ is finite if $\theta$ belongs to a neighborhood 
of $\mathbf{0}$. Then $\{C_n:n\geq 1\}$ converges to $\mathbf{0}$ by the law of large numbers, and 
satisfies the large deviation principle with speed $v_n=n$ and rate function $I_{\mathrm{LD}}$ defined by
$$I_{\mathrm{LD}}(x):=\sup_{\theta\in\mathbb{R}^h}\left\{\langle\theta,x\rangle
-\log\mathbb{E}\left[e^{\langle\theta,X_1\rangle}\right]\right\}
\quad\mbox{for all}\ x\in\mathbb{R}^h$$
(see e.g. Cram\'er Theorem, i.e. Theorem 2.2.30 in \cite{DemboZeitouni}). The weak convergence of
$\{\sqrt{n}C_n:n\geq 1\}$ is a consequence the central limit theorem. Finally, for every sequence
of positive numbers $\{a_n:n\geq 1\}$ such that \eqref{eq:MDconditions} holds with $v_n=n$
(actually we mean a version of \eqref{eq:MDconditions} for a discrete index $n$ in place of $t$), 
$\{\sqrt{a_nn}C_n:n\geq 1\}$ satisfies the LDP with rate function $I_{\mathrm{MD}}$ defined above
by Theorem 3.7.1 in \cite{DemboZeitouni}. Moreover, if $\Sigma$ is invertible, we have
$$I_{\mathrm{MD}}(x)=\frac{1}{2}\langle x,\Sigma^{-1}x\rangle$$
and the Hessian matrix of $I_{\mathrm{MD}}(x)$ at $x=\mathbf{0}$ is equal to $\Sigma^{-1}$.

We talk about non-central moderate deviations when we have a situation similar to the one in Claim 
\ref{claim:standardMD}, and the weak convergence is towards a non-Gaussian distribution.
Some univariate examples are presented in \cite{GiulianoMacci} and in some references
cited therein. An example with multivariate random variables can be found in 
\cite{LeonenkoMacciPacchiarotti} (see Section 3), even if in that case the weak convergence is 
trivial because one has a family of identically distributed random variables.

In this paper we consider a compound fractional Poisson process $\{S_{\nu,\lambda}(t):t\geq 0\}$ with 
light tailed jumps described in the next Condition \ref{cond:cfpp}.

\begin{condition}\label{cond:cfpp}
Let $\{S_{\nu,\lambda}(t):t\geq 0\}$ be defined by
$$S_{\nu,\lambda}(t):=\sum_{k=1}^{N_{\nu,\lambda}(t)}X_k$$
where $\{X_n:n\geq 1\}$ is a sequence of i.i.d. \emph{real} random variables such that 
$\mathbb{E}\left[e^{\theta X_1}\right]$ is finite if $\theta$ belongs to a neighborhood of $0$ 
(i.e. the light tail case), and $\{N_{\nu,\lambda}(t):t\geq 0\}$ is a time fractional 
Poisson process with $\nu\in(0,1)$, independent of $\{X_n:n\geq 1\}$. In particular the 
random variables $\{X_n:n\geq 1\}$ have (common) finite mean and variance, and we set
$$\mu:=\mathbb{E}[X_1]\ \mbox{and}\ \sigma^2:=\mathrm{Var}[X_1].$$
\end{condition}

There are several references on fractional Poisson process; here we recall \cite{BeghinOrsingher2009},
\cite{BeghinOrsingher2010} and \cite{MeerschaertNaneVellaisamy}; in particular we recall that we refer to 
the time fractional Poisson process (for the space and space-time fractional Poisson process see
\cite{OrsingherPolito}). Some properties of the process $\{N_{\nu,\lambda}(t):t\geq 0\}$ will be recalled 
in Section \ref{sec:preliminaries}. 

Our aim is to prove non-central moderate deviations for $C_t:=\frac{S_{\nu,\lambda}(t)}{t}$.
More precisely we bear in mind what we said in Claim \ref{claim:standardMD}, and we mean
the following three statements.
\begin{itemize}
\item The family of random variables $\left\{\frac{S_{\nu,\lambda}(t)}{t}:t>0\right\}$ satisfies the large deviation 
principle with speed $v_t=t$ and a rate function $I_{\mathrm{LD}}$ which does not depend on $\mu$ (see Proposition
\ref{prop:LD}). Note that we have $I_{\mathrm{LD}}(x)=0$ if and only if $x=0$; so $\frac{S_{\nu,\lambda}(t)}{t}$ 
converges to zero (as $t\to\infty$) at least in probability.
\item For
\begin{equation}\label{eq:exponents}
\alpha(\nu):=\left\{\begin{array}{ll}
1-\nu/2&\ \mbox{if}\ \mu=0\\
1-\nu&\ \mbox{if}\ \mu\neq 0,
\end{array}\right.
\end{equation}
$\left\{\frac{t^{\alpha(\nu)}S_{\nu,\lambda}(t)}{t}:t>0\right\}$ converges weakly toward some non-degenerate and 
non-Gaussian distribution (see Proposition \ref{prop:weak-convergence}).
\item For every sequence of positive numbers $\{a_t:t>0\}$ such that \eqref{eq:MDconditions} holds, the family of 
random variables $\left\{\frac{(a_tt)^{\alpha(\nu)}S_{\nu,\lambda}(t)}{t}:t>0\right\}$ satisfies the large deviation 
principle with speed $1/a_t$ and a rate function $I_{\mathrm{MD}}$, which uniquely vanishes at zero (see Proposition
\ref{prop:ncMD}).
\end{itemize}

So, in some sense, we have two non-central moderate deviation results concerning the cases $\mu=0$ and 
$\mu\neq 0$; however these two results share a common underlying large deviation principle for the convergence
in probability to zero which does not depend on $\mu$.

We conclude with the outline of the paper. In Section \ref{sec:preliminaries} we recall some preliminaries
on large deviations and on (possibly compound) fractional Poisson process. In Section \ref{sec:results} we
prove the results. We conclude with a brief discussion of the heavy tail case (we refer to the 
terminology in Condition \ref{cond:cfpp}) in Section \ref{sec:heavy-tail}.

\section{Preliminaries}\label{sec:preliminaries}
In this section we recall some preliminaries on large deviations and on fractional processes.

\subsection{On large deviations}
We start with the definition of large deviation principle (see e.g. \cite{DemboZeitouni}, pages
4-5). In view of what follows we present definitions and results for families of real random variables
$\{Z_t:t>0\}$ defined on the same probability space $(\Omega,\mathcal{F},P)$, and we consider $t\to\infty$.
A family of numbers $\{v_t:t>0\}$ such that $v_t\to\infty$ (as $t\to\infty$) is called a \emph{speed function}, 
and a lower semicontinuous function $I:\mathbb{R}\to[0,\infty]$ is called a \emph{rate function}. Then 
$\{Z_t:t>0\}$ satisfies the large deviation principle (LDP from now on) with speed $v_t$ and a rate function $I$
if
$$\limsup_{t\to\infty}\frac{1}{v_t}\log P(Z_t\in C)\leq-\inf_{x\in C}I(x)\quad\mbox{for all closed sets}\ C,$$
and
$$\liminf_{t\to\infty}\frac{1}{v_t}\log P(Z_t\in O)\geq-\inf_{x\in O}I(x)\quad\mbox{for all open sets}\ O.$$
The rate function $I$ is said to be \emph{good} if, for every $\beta\geq 0$, the level set 
$\{x\in\mathbb{R}:I(x)\leq\beta\}$ is compact. We also recall the following known result (see e.g. Theorem
2.3.6(c) in \cite{DemboZeitouni}).

\begin{theorem}[G\"artner Ellis Theorem]\label{th:GE}
Assume that, for all $\theta\in\mathbb{R}$, there exists
$$\Lambda(\theta):=\lim_{t\to\infty}\frac{1}{v_t}\log\mathbb{E}\left[e^{v_t\theta Z_t}\right]$$
as an extended real number; moreover assume that the origin $\theta=0$ 
belongs to the interior of the set
$$\mathcal{D}(\Lambda):=\{\theta\in\mathbb{R}:\Lambda(\theta)<\infty\}.$$
Furthermore let $\Lambda^*$ be the function defined by
$$\Lambda^*(x):=\sup_{\theta\in\mathbb{R}}\{\theta x-\Lambda(\theta)\}.$$
Then, if $\Lambda$ is essentially smooth and lower semi-continuous, then $\{Z_t:t>0\}$
satisfies the LDP with good rate function $\Lambda^*$.
\end{theorem}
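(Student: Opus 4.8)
The statement is the classical Gärtner–Ellis theorem, and the plan is the standard two-part scheme: prove the large deviation upper bound for every closed set, then the lower bound for every open set, with the hypotheses $0\in\mathrm{int}\,\mathcal{D}(\Lambda)$ and essential smoothness of $\Lambda$ entering in an essential way.

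\textbf{Upper bound.} First I would record that $\Lambda^*$ is a good rate function. Convexity and lower semicontinuity are automatic since $\Lambda^*$ is a Fenchel–Legendre transform; compactness of the level sets $\{x:\Lambda^*(x)\le\beta\}$ follows from $0\in\mathrm{int}\,\mathcal{D}(\Lambda)$, because then for some small $\theta_0>0$ both $\Lambda(\theta_0)$ and $\Lambda(-\theta_0)$ are finite, so $\Lambda^*(x)\ge\theta_0|x|-\max\{\Lambda(\theta_0),\Lambda(-\theta_0)\}$ grows at least linearly. The same inequality, combined with the exponential Chebyshev bound $P(\pm Z_t\ge L)\le e^{-v_t\theta_0 L}\mathbb{E}[e^{\pm v_t\theta_0 Z_t}]$, gives exponential tightness: $\limsup_t v_t^{-1}\log P(|Z_t|\ge L)\to-\infty$ as $L\to\infty$. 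For the weak (compact-set) upper bound I would run the usual covering argument: given compact $K$ and $\delta>0$, for each $x\in K$ pick $\theta_x$ with $\theta_x x-\Lambda(\theta_x)\ge\min\{\Lambda^*(x),1/\delta\}-\delta$, take a small interval around $x$ on which the corresponding Chebyshev estimate still holds up to an extra $\delta$, extract a finite subcover, and use that on the exponential scale $\log$ of a finite sum equals the max of the summands. Exponential tightness then upgrades the weak upper bound to all closed sets.

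\textbf{Lower bound.} It suffices to show $\liminf_t v_t^{-1}\log P(Z_t\in O)\ge-\Lambda^*(x)$ for every open $O$ and every $x\in O$. The core step treats an \emph{exposed point} $x$ of $\Lambda^*$ whose exposing hyperplane $\eta$ lies in $\mathrm{int}\,\mathcal{D}(\Lambda)$. Introduce the tilted laws $\widetilde{P}_t(\mathrm{d}z)\propto e^{v_t\eta z}P(Z_t\in\mathrm{d}z)$; their normalized logarithmic moment generating functions converge to $\theta\mapsto\Lambda(\theta+\eta)-\Lambda(\eta)$, which is finite near $0$ and differentiable at $0$ with derivative $\nabla\Lambda(\eta)=x$. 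Applying the already-proven upper bound under $\widetilde{P}_t$ to the complement of a small ball around $x$ shows $\widetilde{P}_t$ concentrates near $x$; undoing the tilt yields $v_t^{-1}\log P(Z_t\in O)\ge-\eta x+\Lambda(\eta)-o(1)=-\Lambda^*(x)-o(1)$, using $\Lambda^*(x)=\eta x-\Lambda(\eta)$ at an exposed point. Finally one reduces the general $x$ to exposed points: this is where essential smoothness is consumed, via the fact that $\nabla\Lambda$ maps $\mathrm{int}\,\mathcal{D}(\Lambda)$ onto exposed points and that $|\nabla\Lambda(\theta)|\to\infty$ as $\theta$ approaches $\partial\mathcal{D}(\Lambda)$; together with convexity of $\Lambda^*$ this produces, for any $x$ with $\Lambda^*(x)<\infty$, a sequence of exposed points $x_n\to x$ with $\Lambda^*(x_n)\to\Lambda^*(x)$, eventually lying in $O$, to which the previous step applies.

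I expect the reduction to exposed points to be the main obstacle: it is the only place where essential smoothness is genuinely needed, and making precise that the range of $\nabla\Lambda$ is rich enough to approximate every point of the effective domain of $\Lambda^*$ in both position and value requires a careful convex-analysis argument (the Rockafellar-type duality between differentiability of $\Lambda$ and strict convexity of $\Lambda^*$). The change-of-measure step is the conceptual heart but, once the upper bound and exponential tightness are in hand, essentially routine; in the present one-dimensional setting all the convexity bookkeeping is of course lighter than in $\mathbb{R}^h$. Since this is a standard result we would in fact just cite it, but this is the scheme of proof one would reproduce.
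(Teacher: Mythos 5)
The paper does not prove this statement: it is recalled as a known result and attributed to Theorem 2.3.6(c) of Dembo and Zeitouni, which is precisely the source whose proof your sketch reproduces (Chebyshev plus covering for the compact upper bound, exponential tightness from $0\in\mathrm{int}\,\mathcal{D}(\Lambda)$ to pass to closed sets, exponential tilting for the lower bound at exposed points, and essential smoothness via Rockafellar duality to reduce general points to exposed ones). Your outline is the correct standard argument, and citing the result --- as you yourself suggest and as the paper does --- is the appropriate treatment here.
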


We also recall (see e.g. Definition 2.3.5 in \cite{DemboZeitouni}) that $\Lambda$ is essentially smooth
if the interior of $\mathcal{D}(\Lambda)$ is non-empty, the function $\Lambda$ is differentiable 
throughout the interior of $\mathcal{D}(\Lambda)$, and $\Lambda$ is steep, i.e. $|\Lambda^\prime(\theta_n)|\to\infty$
whenever $\theta_n$ is a sequence of points in the interior of $\mathcal{D}(\Lambda)$ which converge to 
a boundary point of $\mathcal{D}(\Lambda)$.

\subsection{On (possibly compound) fractional Poisson process}
We start with the definition of the Mittag-Leffler function (see e.g. \cite{GorenfloKilbasMainardiRogosin}, eq. 
(3.1.1))
$$E_\nu(x):=\sum_{k=0}^\infty\frac{x^k}{\Gamma(\nu k+1)}.$$
It is known (see Proposition 3.6 in \cite{GorenfloKilbasMainardiRogosin} for the case $\alpha\in(0,2)$; indeed 
$\alpha$ in that reference coincides with $\nu$ in this paper) that we have
\begin{equation}\label{eq:ML-asymptotics}
\left\{\begin{array}{l}
E_\nu(x)\sim\frac{e^{x^{1/\nu}}}{\nu}\ \mbox{as}\ x\to\infty\\
E_\nu(x)\to 0\ \mbox{as}\ x\to-\infty.
\end{array}\right.
\end{equation}

Now we recall some moment generating functions which can be expressed in terms of the Mittag-Leffler
function. If we consider the inverse of the stable subordinator $\{L_\nu(t):t\geq 0\}$, then we have 
\begin{equation}\label{eq:MGF-inverse-stable-sub}
\mathbb{E}[e^{\theta L_\nu(t)}]=E_\nu(\theta t^\nu)\ \mbox{for all}\ \theta\in\mathbb{R}.
\end{equation}
This formula appears in several references with $\theta\leq 0$ only; however this restriction is not
needed because we can refer to the analytic continuation of the Laplace transform with complex argument.

Moreover the fractional process $\{N_{\nu,\lambda}(t):t\geq 0\}$ can be expressed as 
$$N_{\nu,\lambda}(t)=N_{1,\lambda}(L_\nu(t))\ \mbox{for all}\ t\geq 0,$$
i.e. a time-changed standard Poisson process $\{N_{1,\lambda}(t):t\geq 0\}$ with an independent inverse 
of the stable subordinator $\{L_\nu(t):t\geq 0\}$ (see e.g. Theorem 2.2 in \cite{MeerschaertNaneVellaisamy};
see also Remark 2.3 in the same article for other references with related results). Then it is easy to 
check that
$$\mathbb{E}[e^{\theta N_{\nu,\lambda}(t)}]=E_\nu(\lambda(e^\theta-1)t^\nu)\ \mbox{for all}\ \theta\in\mathbb{R}$$
and, moreover,
\begin{equation}\label{eq:MGF-S}
\mathbb{E}[e^{\theta S_{\nu,\lambda}(t)}]=E_\nu(\lambda(\mathbb{E}[e^{\theta X_1}]-1)t^\nu)\ \mbox{for all}\ \theta\in\mathbb{R}.
\end{equation}

\section{Results}\label{sec:results}
We start with the common underlying LDP for the convergence in probability of $\left\{\frac{S_{\nu,\lambda}(t)}{t}:t>0\right\}$
to zero which does not depend on $\mu$.

\begin{proposition}\label{prop:LD}
Assume that Condition \ref{cond:cfpp} holds. Moreover let $\Lambda_{\nu,\lambda}$
be the function defined by
\begin{equation}\label{eq:LD-GE-limit}
\Lambda_{\nu,\lambda}(\theta):=\left\{\begin{array}{ll}
(\lambda(\mathbb{E}[e^{\theta X_1}]-1))^{1/\nu}&\ \mbox{if}\ \mathbb{E}[e^{\theta X_1}]>1\\
0&\ \mbox{if}\ \mathbb{E}[e^{\theta X_1}]\leq 1,
\end{array}\right.
\end{equation}
and assume that it is essentially smooth. Then $\left\{\frac{S_{\nu,\lambda}(t)}{t}:t>0\right\}$ 
satisfies the LDP with speed $v_t=t$ and good rate function $I_{\mathrm{LD}}$ defined by
\begin{equation}\label{eq:LD-rf}
I_{\mathrm{LD}}(x):=\sup_{\theta\in\mathbb{R}}\{\theta x-\Lambda_{\nu,\lambda}(\theta)\}.
\end{equation}
\end{proposition}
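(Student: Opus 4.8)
The plan is to apply the G\"artner--Ellis theorem (Theorem \ref{th:GE}) to the family $\left\{\frac{S_{\nu,\lambda}(t)}{t}:t>0\right\}$ with speed $v_t=t$. First I would compute the (normalized) limiting cumulant generating function
$$\Lambda(\theta)=\lim_{t\to\infty}\frac{1}{t}\log\mathbb{E}\left[e^{t\theta\cdot\frac{S_{\nu,\lambda}(t)}{t}}\right]=\lim_{t\to\infty}\frac{1}{t}\log\mathbb{E}\left[e^{\theta S_{\nu,\lambda}(t)}\right].$$
Using \eqref{eq:MGF-S}, the expectation equals $E_\nu(\lambda(\mathbb{E}[e^{\theta X_1}]-1)t^\nu)$, so the argument of the Mittag-Leffler function is $\lambda(\mathbb{E}[e^{\theta X_1}]-1)t^\nu$, which tends to $+\infty$ when $\mathbb{E}[e^{\theta X_1}]>1$ and to $-\infty$ (or stays bounded/zero) when $\mathbb{E}[e^{\theta X_1}]<1$ (the borderline case $\mathbb{E}[e^{\theta X_1}]=1$ gives $E_\nu(0)=1$). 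Plugging in the asymptotics \eqref{eq:ML-asymptotics}: in the first case $\log E_\nu(ct^\nu)\sim (ct^\nu)^{1/\nu}=c^{1/\nu}t$ with $c=\lambda(\mathbb{E}[e^{\theta X_1}]-1)$, so dividing by $t$ and letting $t\to\infty$ yields exactly $(\lambda(\mathbb{E}[e^{\theta X_1}]-1))^{1/\nu}$; in the second case $E_\nu$ tends to $0$, but more to the point $\log E_\nu$ stays $o(t)$ (it is bounded, converging to $\log 1=0$ in the borderline case and to $\log 0$... so here one must be slightly careful), so after division by $t$ the limit is $0$. This identifies $\Lambda=\Lambda_{\nu,\lambda}$ as in \eqref{eq:LD-GE-limit}.

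Next I would verify the hypotheses of Theorem \ref{th:GE}. Since $\mathbb{E}[e^{\theta X_1}]$ is finite in a neighborhood of $0$ (Condition \ref{cond:cfpp}) and $\mathbb{E}[e^{0\cdot X_1}]=1$, for $\theta$ near $0$ we have $\Lambda_{\nu,\lambda}(\theta)<\infty$, so the origin lies in the interior of $\mathcal{D}(\Lambda_{\nu,\lambda})$. Essential smoothness is assumed in the statement. Lower semicontinuity of $\Lambda_{\nu,\lambda}$ follows since $\theta\mapsto\mathbb{E}[e^{\theta X_1}]$ is convex (hence continuous on the interior of its effective domain) and $x\mapsto(\lambda(x-1))^{1/\nu}$ is continuous on $[1,\infty)$, so $\Lambda_{\nu,\lambda}$ is continuous on the interior of its domain and one checks lower semicontinuity at the boundary; alternatively one notes $\Lambda_{\nu,\lambda}$ is convex (being the limit of the convex normalized log-MGFs) and lower semicontinuous. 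Then Theorem \ref{th:GE} gives the LDP with speed $v_t=t$ and good rate function $\Lambda_{\nu,\lambda}^*(x)=\sup_{\theta\in\mathbb{R}}\{\theta x-\Lambda_{\nu,\lambda}(\theta)\}=I_{\mathrm{LD}}(x)$, as claimed in \eqref{eq:LD-rf}.

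The observation that $I_{\mathrm{LD}}$ does not depend on $\mu$ is immediate from the formula: the only dependence on the law of $X_1$ enters through $\mathbb{E}[e^{\theta X_1}]$ and the thresholds $\mathbb{E}[e^{\theta X_1}]\gtrless 1$, none of which single out $\mu=\mathbb{E}[X_1]$ except through the full moment generating function. Likewise $I_{\mathrm{LD}}(x)=0$ iff $x=0$ can be recorded as a by-product: $\Lambda_{\nu,\lambda}(0)=0$ and $\Lambda_{\nu,\lambda}\geq 0$, so $I_{\mathrm{LD}}(0)=\sup_\theta\{-\Lambda_{\nu,\lambda}(\theta)\}=0$; conversely, for $x\neq 0$ one exhibits $\theta$ of the same sign as $x$ with $\theta x-\Lambda_{\nu,\lambda}(\theta)>0$, using that $\Lambda_{\nu,\lambda}(\theta)=o(\theta)$ near $0$ (since $\mathbb{E}[e^{\theta X_1}]-1=O(\theta)$ and $(\cdot)^{1/\nu}$ with $1/\nu>1$ kills the linear term).

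The main obstacle I anticipate is the careful treatment of the regime $\mathbb{E}[e^{\theta X_1}]\leq 1$, and in particular the borderline set $\{\theta:\mathbb{E}[e^{\theta X_1}]=1\}$ and the sub-threshold set where the Mittag-Leffler argument $\to-\infty$: there $E_\nu(\cdot)\to 0$, so $\log E_\nu(\cdot)\to-\infty$, and one needs the correct rate of this divergence to confirm that $\frac1t\log\mathbb{E}[e^{\theta S_{\nu,\lambda}(t)}]\to 0$ rather than to something negative. This requires a more precise asymptotic than \eqref{eq:ML-asymptotics} as $x\to-\infty$ (namely $E_\nu(x)\sim -1/(x\Gamma(1-\nu))$, so $\log E_\nu(x)\sim -\log|x|=-\nu\log t+O(1)=o(t)$), which does give the limit $0$; I would either invoke this refined asymptotic or argue directly from a two-sided bound $0\le \mathbb{E}[e^{\theta S_{\nu,\lambda}(t)}]\le 1$ when $\mathbb{E}[e^{\theta X_1}]\le 1$ to conclude the limit is $0$. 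A secondary technical point is confirming essential smoothness is genuinely needed (it is assumed) and checking that the G\"artner--Ellis hypothesis holds with $\Lambda$ taking the value $+\infty$ outside $\mathcal{D}(\Lambda_{\nu,\lambda})$, which is automatic from the same MGF computation.
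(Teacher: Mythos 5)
Your proposal follows the same route as the paper: apply the G\"artner--Ellis theorem with the limit $\frac{1}{t}\log\mathbb{E}[e^{\theta S_{\nu,\lambda}(t)}]\to\Lambda_{\nu,\lambda}(\theta)$ computed from \eqref{eq:MGF-S} and the Mittag-Leffler asymptotics \eqref{eq:ML-asymptotics}, and it is correct. You are in fact more careful than the paper's one-line proof on the regime $\mathbb{E}[e^{\theta X_1}]\leq 1$, where \eqref{eq:ML-asymptotics} alone only gives $E_\nu\to 0$ and one genuinely needs the polynomial decay rate $E_\nu(x)\sim -1/(x\Gamma(1-\nu))$ to get the limit $0$ (note that your alternative two-sided bound $0\leq\mathbb{E}[e^{\theta S_{\nu,\lambda}(t)}]\leq 1$ only yields the upper half of that limit, so the refined asymptotic is the argument to keep).
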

\begin{proof}
The desired LDP can be derived by applying the G\"artner Ellis Theorem (i.e. Theorem \ref{th:GE}); in fact we have
$$\lim_{t\to\infty}\frac{1}{t}\log\mathbb{E}[e^{\theta S_{\nu,\lambda}(t)}]=\Lambda_{\nu,\lambda}(\theta)
\ \mbox{for all}\ \theta\in\mathbb{R}$$
by \eqref{eq:MGF-S} and \eqref{eq:ML-asymptotics}.
\end{proof}

\begin{remark}\label{rem:difference-with-the-case-nu=1}
In Proposition \ref{prop:LD}, since $\nu\in(0,1)$, we have $I_{\mathrm{LD}}=0$ if and only if 
$x=\Lambda_{\nu,\lambda}^\prime(0)=0$ for every $\mu\in\mathbb{R}$. On the other hand, if $\nu=1$ (and if the 
function $\Upsilon$ defined by
$$\Upsilon_\lambda(\theta):=\left\{\begin{array}{ll}
\lambda(\mathbb{E}[e^{\theta X_1}]-1)&\ \mbox{if}\ \mathbb{E}[e^{\theta X_1}]<\infty\\
0&\ \mbox{otherwise}
\end{array}\right.$$
is essentially smooth), it is well-known that $\left\{\frac{S_{\nu,\lambda}(t)}{t}:t>0\right\}$ satisfies the LDP 
with speed $v_t=t$ and good rate function $I_{\mathrm{LD}}$ defined by 
$$I_{\mathrm{LD}}(x):=\sup_{\theta\in\mathbb{R}}\{\theta x-\Upsilon_\lambda(\theta)\}.$$
In such a case we have $I_{\mathrm{LD}}(x)=0$ if and only if $\Upsilon_\lambda^\prime(0)=\lambda\mu$.
\end{remark}

\begin{remark}\label{rem:link-with-SPL2013-paper}
If we consider the case $X_n=1$ for all $n\geq 1$, Proposition \ref{prop:LD} yields the LDP for
$\left\{\frac{N_{\nu,\lambda}(t)}{t}:t>0\right\}$ and the rate function is given by \eqref{eq:LD-rf} with
and $\Lambda_{\nu,\lambda}$ in \eqref{eq:LD-GE-limit} reads
$$\Lambda_{\nu,\lambda}(\theta):=\left\{\begin{array}{ll}
(\lambda(e^\theta-1))^{1/\nu}&\ \mbox{if}\ \theta>0\\
0&\ \mbox{if}\ \theta\leq 0.
\end{array}\right.$$
So one can check that
$$I_{\mathrm{LD}}(x):=\left\{\begin{array}{ll}
\sup_{\theta>0}\{\theta x-(\lambda(e^\theta-1))^{1/\nu}\}&\ \mbox{if}\ x>0\\
0&\ \mbox{if}\ x=0\\
\infty&\ \mbox{if}\ x<0.
\end{array}\right.$$
The LDP for $\left\{\frac{N_{\nu,\lambda}(t)}{t}:t>0\right\}$ was already proved; see Propositions 3.1 and
3.2 in \cite{BeghinMacciSPL2013} with $h=1$, where the rate function expression is slightly different,
i.e.
$$I_{\mathrm{LD}}(x):=\left\{\begin{array}{ll}
x\sup_{\eta<0}\left\{\frac{\eta}{x}-\log\frac{\lambda}{\lambda+(-\eta)^\nu}\right\}&\ \mbox{if}\ x>0\\
0&\ \mbox{if}\ x=0\\
\infty&\ \mbox{if}\ x<0.
\end{array}\right.$$
Actually the rate functions expressions coincide; in fact, if we set $\theta=-\log\frac{\lambda}{\lambda+(-\eta)^\nu}$,
for $x>0$ we have
$$x\sup_{\eta<0}\left\{\frac{\eta}{x}-\log\frac{\lambda}{\lambda+(-\eta)^\nu}\right\}
=\sup_{\theta>0}\{-(\lambda(e^\theta-1))^{1/\nu}+x\theta\}.$$
\end{remark}

\begin{remark}\label{rem:Glynn-Whitt}
We recall that $\{N_{\nu,\lambda}(t):t\geq 0\}$ is a renewal process; so we have
$$N_{\nu,\lambda}(t):=\sum_{k=1}^\infty1_{\{T_1+\cdots+T_n\leq t\}}$$
for some the i.i.d. interarrival times $\{T_n:n\geq 1\}$. Then, if we set
$$\kappa(\eta):=\log\mathbb{E}[e^{\eta T_1}],$$
in Remark \ref{rem:link-with-SPL2013-paper} we have considered the equality 
$\theta=-\kappa(\eta)$ for $\eta\in(0,\infty)$. In conclusion we have
$$x\sup_{\eta<0}\left\{\frac{\eta}{x}-\kappa(\eta)\right\}
=\sup_{\theta>0}\{\kappa^{-1}(-\theta)+x\theta\}=:\Psi_\kappa^*(x),$$
where
$$\Psi_\kappa^*(x):=\sup_{\theta>0}\{\theta x-\Psi_\kappa(\theta)\}\ \mbox{and}\ \Psi_\kappa(\theta):=-\kappa^{-1}(-\theta),$$
and this agrees with formulas (12)-(13) in \cite{GlynnWhitt}.
\end{remark}

Now we present the weak convergence results as $t\to\infty$. For the sake of completeness we give a brief
proof by taking the limit of the moment generating functions even if some of these results are known.
For instance the convergence for $\mu\neq 0$ agrees with the weak convergence stated just after eq. (3.7) 
in \cite{VellaisamyMaheshwari} for a less general case (i.e. for the case $X_n=1$ for all $n\geq 1$, and 
therefore for $\{N_{\nu,\lambda}(t):t\geq 0\}$ instead of $\{S_{\nu,\lambda}(t):t\geq 0\}$). The convergence
for $\mu=0$ appears in \cite{Meerschaert-et-al} (Section II), in \cite{MeerschaertScheffler} (Theorem 4.2)
and it is also cited in the introduction of \cite{LeonenkoMeerschaertSchillingSikorskii}; however in those 
references the results are given for sample paths.
Another recent weak convergence result with $\mu=0$ appears in \cite{OliveiraBarretosouzaSilva} (Proposition 2.1);
they let $\lambda$ go to infinity with $t=1$, and they get the same limit distribution called 
\emph{Normal variance mixture}.

\begin{proposition}\label{prop:weak-convergence}
Assume that Condition \ref{cond:cfpp} holds and let $\alpha(\nu)$ be defined in \eqref{eq:exponents}.
Then:
\begin{itemize}
\item if $\mu=0$, then $\{t^{\alpha(\nu)}\frac{S_{\nu,\lambda}(t)}{t}:t>0\}$ converges weakly to
$\sqrt{\lambda\sigma^2L_\nu(1)}Z$, where $Z$ is a standard Normal distributed random variable, and
independent to $L_\nu(1)$;
\item if $\mu\neq 0$, then $\{t^{\alpha(\nu)}\frac{S_{\nu,\lambda}(t)}{t}:t>0\}$ converges weakly to
$\lambda\mu L_\nu(1)$.
\end{itemize}
\end{proposition}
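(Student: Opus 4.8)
The plan is to identify both limit laws through their moment generating functions and then invoke the classical continuity theorem for moment generating functions (Curtiss). Write $\gamma:=1-\alpha(\nu)$, so that the rescaled variable is $t^{\alpha(\nu)}\frac{S_{\nu,\lambda}(t)}{t}=t^{-\gamma}S_{\nu,\lambda}(t)$, with $\gamma=\nu/2$ when $\mu=0$ and $\gamma=\nu$ when $\mu\neq 0$. By \eqref{eq:MGF-S}, for every $\theta\in\mathbb{R}$,
$$\mathbb{E}\left[e^{\theta t^{-\gamma}S_{\nu,\lambda}(t)}\right]=E_\nu\left(\lambda\left(\mathbb{E}\left[e^{\theta t^{-\gamma}X_1}\right]-1\right)t^\nu\right),$$
and this is finite as soon as $|\theta|t^{-\gamma}$ belongs to the neighborhood of the origin on which $s\mapsto\mathbb{E}[e^{sX_1}]$ is finite (such a neighborhood exists by Condition \ref{cond:cfpp}); in particular it is finite for every fixed $\theta$ once $t$ is large enough.

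First I would treat the case $\mu\neq 0$, taking $\gamma=\nu$. The light-tail assumption makes $s\mapsto\mathbb{E}[e^{sX_1}]$ smooth (indeed analytic) near $0$, with value $1$ and derivative $\mu$ there, so Taylor's formula gives $\mathbb{E}[e^{sX_1}]-1=\mu s+o(s)$ as $s\to 0$. Plugging $s=\theta t^{-\nu}$ yields $\lambda(\mathbb{E}[e^{\theta t^{-\nu}X_1}]-1)t^\nu\to\lambda\mu\theta$ as $t\to\infty$, hence, by continuity of the entire function $E_\nu$ and by \eqref{eq:MGF-inverse-stable-sub} with $t=1$,
$$\mathbb{E}\left[e^{\theta t^{-\nu}S_{\nu,\lambda}(t)}\right]\longrightarrow E_\nu(\lambda\mu\theta)=\mathbb{E}\left[e^{\theta\lambda\mu L_\nu(1)}\right]\qquad(\theta\in\mathbb{R}).$$
Since the limiting moment generating function is finite on all of $\mathbb{R}$, the continuity theorem for moment generating functions gives $t^{-\nu}S_{\nu,\lambda}(t)\Rightarrow\lambda\mu L_\nu(1)$.

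For $\mu=0$, taking $\gamma=\nu/2$, I would instead use the second-order expansion $\mathbb{E}[e^{sX_1}]-1=\mu s+\tfrac12\mathbb{E}[X_1^2]s^2+o(s^2)=\tfrac{\sigma^2}{2}s^2+o(s^2)$ as $s\to 0$ (here $\mu=0$ forces $\mathbb{E}[X_1^2]=\sigma^2$). With $s=\theta t^{-\nu/2}$ this gives $\lambda(\mathbb{E}[e^{\theta t^{-\nu/2}X_1}]-1)t^\nu\to\frac{\lambda\sigma^2\theta^2}{2}$, so $\mathbb{E}[e^{\theta t^{-\nu/2}S_{\nu,\lambda}(t)}]\to E_\nu(\lambda\sigma^2\theta^2/2)$. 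It remains to recognize this as the moment generating function of $\sqrt{\lambda\sigma^2L_\nu(1)}\,Z$: conditioning on $L_\nu(1)$ (recall $Z$ is independent of it) and using the Gaussian moment generating function, then \eqref{eq:MGF-inverse-stable-sub} with $t=1$,
$$\mathbb{E}\left[e^{\theta\sqrt{\lambda\sigma^2L_\nu(1)}\,Z}\right]=\mathbb{E}\left[e^{\frac12\theta^2\lambda\sigma^2L_\nu(1)}\right]=E_\nu\left(\frac{\lambda\sigma^2\theta^2}{2}\right),$$
and another application of the continuity theorem concludes. The only point that is not a routine computation is this passage from convergence of moment generating functions to weak convergence; it is legitimate because both candidate limits have moment generating functions that are finite in a neighborhood of the origin (in fact everywhere, $E_\nu$ being entire), and because our prelimit moment generating functions are eventually finite and converge pointwise on $\mathbb{R}$, so Curtiss' theorem applies.
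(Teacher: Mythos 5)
Your proposal is correct and follows essentially the same route as the paper: express the moment generating function via \eqref{eq:MGF-S}, Taylor-expand $\mathbb{E}[e^{sX_1}]$ to first or second order according to whether $\mu\neq 0$ or $\mu=0$, pass to the limit using the continuity of $E_\nu$, and identify the limit as the moment generating function of the claimed law via \eqref{eq:MGF-inverse-stable-sub}. The only difference is that you make explicit the appeal to Curtiss' continuity theorem for moment generating functions, which the paper leaves implicit.
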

\begin{proof}
In both cases $\mu=0$ and $\mu\neq 0$ we study the limit as $t\to\infty$ of the moment generating 
functions. We take into account \eqref{eq:MGF-S} for the expressions of the moment generating functions,
and we take into account \eqref{eq:ML-asymptotics} when we take the limit.

If $\mu=0$ we have
\begin{multline*}
\mathbb{E}\left[e^{\theta t^{\alpha(\nu)}\frac{S_{\nu,\lambda}(t)}{t}}\right]
=\mathbb{E}\left[e^{\theta\frac{S_{\nu,\lambda}(t)}{t^{\nu/2}}}\right]
=E_\nu(\lambda(\mathbb{E}[e^{\theta X_1/t^{\nu/2}}]-1)t^\nu)\\
=E_\nu\left(\lambda\left(1+\frac{\sigma^2\theta^2}{2t^\nu}+o\left(\frac{1}{t^\nu}\right)-1\right)t^\nu\right)
\to E_\nu\left(\lambda\frac{\sigma^2\theta^2}{2}\right)\ \mbox{for all}\ \theta\in\mathbb{R}.
\end{multline*}
Thus the desired weak convergence is proved noting that (here we take into account 
\eqref{eq:MGF-inverse-stable-sub})
$$\mathbb{E}\left[e^{\theta\sqrt{\lambda\sigma^2L_\nu(1)}Z}\right]
=\mathbb{E}\left[e^{\frac{\theta^2\lambda\sigma^2}{2}L_\nu(1)}\right]
=E_\nu\left(\lambda\frac{\sigma^2\theta^2}{2}\right)\ \mbox{for all}\ \theta\in\mathbb{R}.$$

If $\mu\neq 0$ we have
\begin{multline*}
\mathbb{E}\left[e^{\theta t^{\alpha(\nu)}\frac{S_{\nu,\lambda}(t)}{t}}\right]
=\mathbb{E}\left[e^{\theta\frac{S_{\nu,\lambda}(t)}{t^\nu}}\right]
=E_\nu(\lambda(\mathbb{E}[e^{\theta X_1/t^\nu}]-1)t^\nu)\\
=E_\nu\left(\lambda\left(1+\frac{\mu\theta}{t^\nu}+\frac{\sigma^2\theta^2}{2t^{2\nu}}+o\left(\frac{1}{t^{2\nu}}\right)-1\right)t^\nu\right)
\to E_\nu\left(\lambda\mu\theta\right)\ \mbox{for all}\ \theta\in\mathbb{R}.
\end{multline*}
Thus the desired weak convergence is proved by \eqref{eq:MGF-inverse-stable-sub}.
\end{proof}

Now we present the non-central moderate deviation results.

\begin{proposition}\label{prop:ncMD}
Assume that Condition \ref{cond:cfpp} holds and let $\alpha(\nu)$ be defined in \eqref{eq:exponents}.
Then, for every family of positive numbers $\{a_t:t>0\}$ such that \eqref{eq:MDconditions} holds, the
family of random variables $\left\{\frac{(a_tt)^{\alpha(\nu)}S_{\nu,\lambda}(t)}{t}:t>0\right\}$ 
satisfies the LDP with speed $1/a_t$ and good rate function $I_{\mathrm{MD},\mu}$ defined by:
$$\left.\begin{array}{cc}
\mbox{if}\ \mu=0,&\ I_{\mathrm{MD},\mu}(x):=((\nu/2)^{\nu/(2-\nu)}-(\nu/2)^{2/(2-\nu)})\left(\frac{2x^2}{\lambda\sigma^2}\right)^{1/(2-\nu)};\\
\mbox{if}\ \mu>0,&\ I_{\mathrm{MD},\mu}(x):=\left\{\begin{array}{ll}
(\nu^{\nu/(1-\nu)}-\nu^{1/(1-\nu)})\left(\frac{x}{\lambda\mu}\right)^{1/(1-\nu)}&\ \mbox{if}\ x\geq 0\\
\infty&\ \mbox{if}\ x<0;
\end{array}\right.\\
\mbox{if}\ \mu<0,&\ I_{\mathrm{MD},\mu}(x):=\left\{\begin{array}{ll}
(\nu^{\nu/(1-\nu)}-\nu^{1/(1-\nu)})\left(\frac{x}{\lambda\mu}\right)^{1/(1-\nu)}&\ \mbox{if}\ x\leq 0\\
\infty&\ \mbox{if}\ x>0.
\end{array}\right.
\end{array}\right.$$
\end{proposition}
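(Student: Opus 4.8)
The plan is to obtain all three cases at once from the G\"artner--Ellis Theorem (Theorem \ref{th:GE}), applied to the family $Z_t:=\frac{(a_tt)^{\alpha(\nu)}S_{\nu,\lambda}(t)}{t}$ with speed $v_t=1/a_t$. Since $v_t\theta Z_t=\theta(a_tt)^{\alpha(\nu)-1}S_{\nu,\lambda}(t)$ and $1/v_t=a_t$, and since by \eqref{eq:exponents} one has $\alpha(\nu)-1=-\nu/2$ when $\mu=0$ and $\alpha(\nu)-1=-\nu$ when $\mu\neq0$, the quantity to compute is
$$\Lambda(\theta):=\lim_{t\to\infty}a_t\log\mathbb{E}\left[e^{\theta Z_t/a_t}\right]
=\lim_{t\to\infty}a_t\log E_\nu\!\left(\lambda(\mathbb{E}[e^{s_tX_1}]-1)\,t^\nu\right),\qquad s_t:=\theta(a_tt)^{\alpha(\nu)-1},$$
the second equality coming from \eqref{eq:MGF-S} (with $\theta$ there replaced by $s_t$, which is eventually admissible since $s_t\to0$ as $a_tt\to\infty$ by \eqref{eq:MDconditions}).

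First I would pin down the asymptotics of the argument of $E_\nu$. Since $\mathbb{E}[e^{\theta X_1}]$ is finite near $0$, all moments of $X_1$ exist and $\mathbb{E}[e^{sX_1}]-1=\mu s+\tfrac12\mathbb{E}[X_1^2]s^2+o(s^2)$ as $s\to0$. When $\mu=0$ the leading contribution is $\tfrac{\sigma^2}{2}s_t^2$ and, with $s_t=\theta(a_tt)^{-\nu/2}$, this yields $\lambda(\mathbb{E}[e^{s_tX_1}]-1)t^\nu=\tfrac{\lambda\sigma^2\theta^2}{2}a_t^{-\nu}+o(a_t^{-\nu})$; when $\mu\neq0$ the leading contribution is $\mu s_t$ and, with $s_t=\theta(a_tt)^{-\nu}$, one gets $\lambda(\mathbb{E}[e^{s_tX_1}]-1)t^\nu=\lambda\mu\theta\,a_t^{-\nu}+o(a_t^{-\nu})$ (the $s_t^2$ term being negligible, as it carries an extra factor $(a_tt)^{-\nu}$). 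In either case the argument diverges. If its leading coefficient is positive --- automatic for $\mu=0$ and $\theta\neq0$, and otherwise exactly the case $\lambda\mu\theta>0$ --- then \eqref{eq:ML-asymptotics} gives $\log E_\nu(x)=x^{1/\nu}-\log\nu+o(1)$ as $x\to+\infty$, whence $a_t\log\mathbb{E}[e^{\theta Z_t/a_t}]\to(\lambda\sigma^2\theta^2/2)^{1/\nu}$ (resp.\ $(\lambda\mu\theta)^{1/\nu}$). If instead $\lambda\mu\theta<0$ the argument tends to $-\infty$, and here \eqref{eq:ML-asymptotics} only gives $E_\nu\to0$; I would then invoke the sharper classical asymptotic $E_\nu(x)\sim-1/(\Gamma(1-\nu)x)$ as $x\to-\infty$, which makes $\log E_\nu(x)$ grow only logarithmically in $|x|$, forcing $a_t\log\mathbb{E}[e^{\theta Z_t/a_t}]\to0$ (the case $\theta=0$ being trivial). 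Putting this together, $\Lambda(\theta)=(\lambda\sigma^2\theta^2/2)^{1/\nu}$ for all $\theta$ when $\mu=0$, while $\Lambda(\theta)=\bigl((\lambda\mu\theta)_+\bigr)^{1/\nu}$ when $\mu\neq0$, where $x_+:=\max\{x,0\}$.

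Next I would check the hypotheses of Theorem \ref{th:GE}: in all cases $\mathcal{D}(\Lambda)=\mathbb{R}$, so $0$ is interior and steepness holds vacuously, and $\Lambda$ is of class $C^1$ on $\mathbb{R}$ (when $\mu=0$ because $2/\nu>1$; when $\mu\neq0$ because $1/\nu>1$, so the one-sided derivatives of $\Lambda$ at $\theta=0$ both vanish), hence $\Lambda$ is essentially smooth and lower semicontinuous. Consequently $\{Z_t:t>0\}$ satisfies the LDP with speed $1/a_t$ and good rate function $\Lambda^*(x)=\sup_{\theta\in\mathbb{R}}\{\theta x-\Lambda(\theta)\}$, and it remains to identify $\Lambda^*$ with $I_{\mathrm{MD},\mu}$. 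For $\mu=0$, $\Lambda(\theta)=c|\theta|^{2/\nu}$ with $c=(\lambda\sigma^2/2)^{1/\nu}$, and the elementary conjugate of a power gives $\Lambda^*(x)=\tfrac{2-\nu}{\nu}(\nu/2)^{2/(2-\nu)}\bigl(2x^2/(\lambda\sigma^2)\bigr)^{1/(2-\nu)}$, where $\tfrac{2-\nu}{\nu}(\nu/2)^{2/(2-\nu)}=(\nu/2)^{\nu/(2-\nu)}-(\nu/2)^{2/(2-\nu)}$ by $2/\nu-1=(2-\nu)/\nu$. For $\mu\neq0$, $\Lambda$ vanishes on the half-line $\{\lambda\mu\theta\leq0\}$ and equals $(\lambda\mu\theta)^{1/\nu}$ on the complement; letting $\theta$ run to infinity along the half-line where $\Lambda=0$ shows $\Lambda^*(x)=+\infty$ precisely when $\mu x<0$, while on the complementary half-line the same power-conjugate computation gives $\Lambda^*(x)=\tfrac{1-\nu}{\nu}\nu^{1/(1-\nu)}\bigl(x/(\lambda\mu)\bigr)^{1/(1-\nu)}$ with $\tfrac{1-\nu}{\nu}\nu^{1/(1-\nu)}=\nu^{\nu/(1-\nu)}-\nu^{1/(1-\nu)}$ by $1/\nu-1=(1-\nu)/\nu$; this reproduces the stated formulas, the condition $\mu x<0$ reading $x<0$ when $\mu>0$ and $x>0$ when $\mu<0$.

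The step I expect to be the real obstacle is the regime where the Mittag-Leffler argument tends to $-\infty$: the asymptotics stated in \eqref{eq:ML-asymptotics} only assert $E_\nu(x)\to0$, which by itself does not determine $\lim a_t\log\mathbb{E}[e^{\theta Z_t/a_t}]$, so one genuinely needs the sharper (polynomial) decay of $E_\nu$ at $-\infty$ --- available from \cite{GorenfloKilbasMainardiRogosin} --- to conclude that this limit is $0$ and not some strictly negative value. The rest --- the Taylor expansions, the control of the $o(a_t^{-\nu})$ remainders, and the two Legendre-transform computations delivering the explicit constants --- is routine bookkeeping.
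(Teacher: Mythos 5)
Your proof is correct and follows essentially the same route as the paper's: G\"artner--Ellis applied to the rescaled family with speed $1/a_t$, the limit $\Lambda(\theta)=\lim_t a_t\log E_\nu(\cdot)$ computed via the Taylor expansion of $\mathbb{E}[e^{sX_1}]$ and the Mittag--Leffler asymptotics \eqref{eq:ML-asymptotics}, and the stated rate functions recovered as Legendre transforms of the resulting power-type limits (your constants check out against the paper's). The only point where you go beyond the paper is the regime $\lambda\mu\theta<0$, where you rightly observe that $E_\nu(x)\to0$ alone does not determine $\lim a_t\log E_\nu(\cdot)$ and invoke the polynomial decay $E_\nu(x)\sim-1/(\Gamma(1-\nu)x)$ as $x\to-\infty$; the paper tacitly relies on this same sharper fact, so this is a clarification rather than a different argument.
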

\begin{proof}
For every $\mu\in\mathbb{R}$ we apply the G\"artner Ellis Theorem (Theorem \ref{th:GE}). So we have to take
$$\Lambda_{\nu,\lambda,\mu}(\theta):=\lim_{t\to\infty}
\frac{1}{1/a_t}\log\mathbb{E}\left[e^{\frac{\theta}{a_t}\frac{(a_tt)^{\alpha(\nu)}S_{\nu,\lambda}(t)}{t}}\right]
\ \mbox{for all}\ \theta\in\mathbb{R},$$
or equivalently
$$\Lambda_{\nu,\lambda,\mu}(\theta)
:=\lim_{t\to\infty}a_t\log E_\nu\left(\lambda\left(\mathbb{E}\left[e^{\frac{\theta}{(a_tt)^{1-\alpha(\nu)}}X_1}\right]-1\right)\right)
\ \mbox{for all}\ \theta\in\mathbb{R};$$
in particular we refer to \eqref{eq:ML-asymptotics} when we take the limit. Moreover, for 
every $\mu$, the function $\Lambda_{\nu,\lambda,\mu}$ satisfies the hypotheses of the G\"artner Ellis Theorem 
(this can be checked by considering the expressions below), and therefore the LDP holds with good rate
function $I_{\mathrm{MD},\mu}$ defined by
\begin{equation}\label{eq:MD-Legendre-transform}
I_{\mathrm{MD},\mu}(x):=\sup_{\theta\in\mathbb{R}}\{\theta x-\Lambda_{\nu,\lambda,\mu}(\theta)\}.
\end{equation}
Then, as we shall explain below, for every $\mu$ the rate function expression in \eqref{eq:MD-Legendre-transform}
coincides with the rate function $I_{\mathrm{MD},\mu}$ in the statement.

If $\mu=0$ we have
\begin{multline*}
a_t\log E_\nu\left(\lambda\left(\mathbb{E}\left[e^{\frac{\theta}{(a_tt)^{1-\alpha(\nu)}}X_1}\right]-1\right)\right)
=a_t\log E_\nu\left(\lambda\left(1+\frac{\theta^2\sigma^2}{2(a_tt)^\nu}+o\left(\frac{1}{(a_tt)^\nu}\right)-1\right)t^\nu\right)\\
=a_t\log E_\nu\left(\lambda\left(\frac{\theta^2\sigma^2}{2(a_tt)^\nu}+o\left(\frac{1}{(a_tt)^\nu}\right)\right)t^\nu\right)
=a_t\log E_\nu\left(\frac{\lambda}{a_t^\nu}\left(\frac{\theta^2\sigma^2}{2}+(a_tt)^\nu o\left(\frac{1}{(a_tt)^\nu}\right)\right)\right),
\end{multline*}
and therefore
$$\lim_{t\to\infty}a_t\log E_\nu\left(\lambda\left(\mathbb{E}\left[e^{\frac{\theta}{(a_tt)^{1-\alpha(\nu)}}X_1}\right]-1\right)\right)
=\left(\frac{\lambda\theta^2\sigma^2}{2}\right)^{1/\nu}=:\Lambda_{\nu,\lambda,\mu}(\theta)\ \mbox{for all}\ \theta\in\mathbb{R};$$
thus the desired LDP holds with good rate function $I_{\mathrm{MD},\mu}$ defined by \eqref{eq:MD-Legendre-transform}
which coincides with the rate function expression in the statement (indeed one can check that this supremum is attained at 
$\theta=\theta_x:=\left(\frac{2}{\lambda\sigma^2}\right)^{1/(2-\nu)}\left(\frac{\nu x}{2}\right)^{\nu/(2-\nu)}$).

If $\mu>0$ we have
\begin{multline*}
a_t\log E_\nu\left(\lambda\left(\mathbb{E}\left[e^{\frac{\theta}{(a_tt)^{1-\alpha(\nu)}}X_1}\right]-1\right)\right)\\
=a_t\log E_\nu\left(\lambda\left(1+\frac{\theta\mu}{(a_tt)^\nu}+o\left(\frac{1}{(a_tt)^\nu}\right)-1\right)t^\nu\right)\\
=a_t\log E_\nu\left(\frac{\lambda}{a_t^\nu}\left(\theta\mu+(a_tt)^\nu o\left(\frac{1}{(a_tt)^\nu}\right)\right)\right)
\end{multline*}
and therefore
$$\lim_{t\to\infty}a_t\log E_\nu\left(\lambda\left(\mathbb{E}\left[e^{\frac{\theta}{(a_tt)^{1-\alpha(\nu)}}X_1}\right]-1\right)\right)
=\left\{\begin{array}{ll}
(\lambda\theta\mu)^{1/\nu}&\ \mbox{if}\ \theta>0\\
0&\ \mbox{if}\ \theta\leq 0
\end{array}\right.=:\Lambda_{\nu,\lambda,\mu}(\theta)\ \mbox{for all}\ \theta\in\mathbb{R};$$
thus the desired LDP holds with good rate function $I_{\mathrm{MD},\mu}$ defined by \eqref{eq:MD-Legendre-transform}
which coincides with the rate function expression in the statement (indeed one can check that this supremum is equal to 
infinity for $x<0$ (by letting $\theta$ go to $-\infty$), and it is attained at 
$\theta=\theta_x:=\frac{(\nu x)^{\nu/(1-\nu)}}{(\lambda\mu)^{1/(1-\nu)}}$ for $x\geq 0$).

If $\mu<0$ we can repeat the same computations presented for the case $\mu>0$ but, when we take the limit as $t\to\infty$, 
we have
$$\Lambda_{\nu,\lambda,\mu}(\theta):=\left\{\begin{array}{ll}
(\lambda\theta\mu)^{1/\nu}&\ \mbox{if}\ \theta<0\\
0&\ \mbox{if}\ \theta\geq 0
\end{array}\right.\ \mbox{for all}\ \theta\in\mathbb{R};$$
thus the desired LDP holds with good rate function $I_{\mathrm{MD},\mu}$ defined by \eqref{eq:MD-Legendre-transform}
which coincides with the rate function expression in the statement (indeed one can check that this supremum is equal to 
infinity for $x>0$ (by letting $\theta$ go to $+\infty$), and it is attained at
$\theta=\theta_x:=-\frac{(-\nu x)^{\nu/(1-\nu)}}{(-\lambda\mu)^{1/(1-\nu)}}$ for $x\leq 0$).
\end{proof}

\begin{remark}\label{rem:*}
The rate function $I_{\mathrm{MD},\mu}$ for the case $\mu<0$ can be computed by referring to the case $\mu>0$; indeed
we have
$$I_{\mathrm{MD},\mu}(x)=I_{\mathrm{MD},-\mu}(-x)=\left\{\begin{array}{ll}
(\nu^{\nu/(1-\nu)}-\nu^{1/(1-\nu)})\left(\frac{(-x)}{\lambda(-\mu)}\right)^{1/(1-\nu)}&\ \mbox{if}\ -x\geq 0\\
\infty&\ \mbox{if}\ -x<0,
\end{array}\right.$$
and we immediately recover the rate function in the statement of Proposition \ref{prop:ncMD} for the case $\mu<0$.
\end{remark}

\section{A brief discussion on the heavy tail case}\label{sec:heavy-tail}
One can wonder what happens if the random variables $\{X_n:n\geq 1\}$ are not light-tailed distributed (see Condition
\ref{cond:cfpp}). Obviously in this case we cannot apply the G\"artner Ellis Theorem, and we cannot say how to approach
the problem. 

Typically the results on the asymptotic behavior of (possibly compound) sums of i.i.d. heavy tailed distributed random 
variables are not formulated in terms of large deviation principles. However some references provide large deviation 
principles for sums of i.i.d. semi-exponential distributed random variables; in particular the heavy tailed Weibull 
distribution, i.e. the case with distribution function
$$F(x)=1-e^{-(\lambda x)^r}\ \mbox{for}\ x\geq 0),\ \mbox{for some}\ r\in(0,1),$$
belongs to this class. Lemma 1 in \cite{Gantert} provides the LDP for empirical means (that reference also provides
sample path versions of this result); the speed function is $n^r$ (so it is a slower speed) and the rate function is 
not convex. In some sense the results in \cite{Gantert} reveal the bigger influence of extreme values on the partial sums,
and the situation is very different from the light tail case.

Large deviations for compound Poisson sums of i.i.d. semi-exponential distributed random variables can be derived from 
Proposition 2.1 in \cite{StabileTorrisi} (a sample-path version of this result can be found \cite{DuffyTorrisi}) which 
concerns Poisson shot noise processes, and the result does not depend on the shot shape. The rate function keeps the 
properties cited above for the result in \cite{Gantert} case without compound sums. So one could expect that it is possible
to prove a similar result for the compound fractional Poisson processes in this paper when $\{X_n:n\geq 1\}$ are i.i.d. 
and semi-exponential distributed. In this case we should have the analogue of Proposition \ref{prop:LD} in this paper, and
this could be a starting point to obtain a non-central moderate deviations for a heavy tail case.

\paragraph{Acknowledgements.}
We thank the referee for suggesting the idea to discuss the heavy tail case.

\end{document}